%
%
%


\documentclass{conm-p-l}




\usepackage{}


\newtheorem{theorem}{Theorem}[section]

\theoremstyle{definition}
\newtheorem{definition}[theorem]{Definition}
\newtheorem{example}[theorem]{Example}

\theoremstyle{remark}
\newtheorem{remark}[theorem]{Remark}

\numberwithin{equation}{section}
\newcommand{\RR}{\mathbb R}
\newcommand{\norm}[1]{\|#1\|}
\newtheorem{prob}[theorem]{Problem}
\newtheorem{notation}[theorem]{Notation}
\newtheorem{corollary}[theorem]{Corollary}
\newtheorem{ex}[theorem]{Example}
\newtheorem{proposition}[theorem]{Proposition}
\newcommand{\Q}{\mathbb{Q}}
\DeclareMathOperator{\spn}{span}
\begin{document}

\title{Phase Retrieval by Hyperplanes}


\author[Botelho-Andrade]{Sara Botelho-Andrade}
\address{Department of Mathematics, University
	of Missouri, Columbia, MO 65211-4100}
\curraddr{}
\email{sandrade102087@gmail.com}
\thanks{The first 6 authors were supported by
	NSF DMS 1307685; and  NSF ATD 1321779; ARO W911NF-16-1-0008.
Zhiqiang Xu was supported  by NSFC grant (11422113,  91630203, 11331012).
	Part of this reseaech was carried out while the authors
	were visiting the Hong Kong University of Science and
	Technology with support from a grant from (ICERM)
	Institute for Computational and Experimental Research in
	Mathematics.}

\author[Casazza]{Peter G. Casazza}
\address{Department of Mathematics, University
	of Missouri, Columbia, MO 65211-4100}
\curraddr{}
\email{Casazzap@missouri.edu}
\thanks{}

\author[Cheng]{Desai Cheng}
\address{Department of Mathematics, University
	of Missouri, Columbia, MO 65211-4100}
\curraddr{}
\email{chengdesai@gmail.com}
\thanks{}

\author[Haas]{John Haas}
\address{Department of Mathematics, University
	of Missouri, Columbia, MO 65211-4100}
\curraddr{}
\email{terraformthedreamscape@gmail.com}
\thanks{}

\author[Tran]{Tin T. Tran}
\address{Department of Mathematics, University
	of Missouri, Columbia, MO 65211-4100}
\curraddr{}
\email{tinmizzou@gmail.com}
\thanks{}

\author[Tremain]{Janet C. Tremain}
\address{Department of Mathematics, University
	of Missouri, Columbia, MO 65211-4100}
\curraddr{}
\email{Tremainjc@missouri.edu}
\thanks{}

\author[Xu]{Zhiqiang Xu}
\address{LSEC, Inst. Comp. Math., Academy of Mathematics and System Science, Chinese Academy of Sciences, Beijing, 100091, China
	}
\curraddr{}
\email{xuzq@lsec.cc.ac.cn}
\thanks{}

\subjclass[2000]{Primary 32C15}

\date{}

\begin{abstract}
	We show that a scalable frame does phase retrieval
	if and only if
	the hyperplanes of its orthogonal complements do phase
	retrieval.  We then show this result
	fails in general by giving an example of a frame for $\RR^3$ which does phase
	retrieval but its induced hyperplanes fail phase retrieval. Moreover, we show that such frames always exist in $\RR^d$ for any dimension $d$. We also
	give an example of a frame in $\RR^3$ which fails phase retrieval
	but its perps do phase retrieval. We will also see that a family of
	hyperplanes doing phase retrieval in $\RR^d$ must contain at least
	$2d-2$ hyperplanes.  Finally, we provide an example of six
	hyperplanes in $\RR^4$ which do phase retrieval.
\end{abstract}

\maketitle

\section{Introduction}

In some applications in engineering, the phase of a signal is
lost during processing.
The problem of retrieving the phase of a signal, given a set of intensity measurements, has been studied by engineers for many years. Signals passing through linear systems often result in lost or distorted phase information. This partial loss of phase information occurs in various applications including speech recognition~\cite{BeRi99,RaJu93,ReBlScCa004}, and optics applications such as X-ray crystallography~\cite{BaMn86,Fi78,Fi82}. The concept of \textit{phase retrieval} for Hilbert space
frames was introduced in 2006 by Balan, Casazza, and Edidin~\cite{BCE} and since then it has become an active area of research. Phase retrieval deals with recovering the phase of a signal given intensity measurements from a redundant linear system. In phaseless reconstruction the unknown signal itself is reconstructed from these measurements. In recent literature, the two terms were used interchangeably.  However it is not obvious from the definitions that the two are equivalent. Recently, authors in~\cite{SaCa016} proved that phase retrieval is equivalent to phaseless reconstruction in both the real and complex case.

Phase retrieval has been defined for vectors as well as for projections. \textit{Phase retrieval by projections} occur in real life problems, such as crystal twinning~\cite{Dr010}, where the signal is projected onto some higher dimensional subspaces and has to be recovered from the norms of the projections of the vectors onto the subspaces. We refer the reader to~\cite{CCPW} for a detailed study of phase retrieval by projections. At times these projections are identified with their target spaces. Determining when subspaces $\{W_i\}_{i=1}^n$ and $\{W_i^\perp\}_{i=1}^n$ both do phase retrieval has given way to the notion of \textit{norm retrieval} \cite{BaCaCaJaWo014}, another important area of research.

In this paper we make a detailed study of phase retrieval by
hyperplanes.  We will see that it takes at least $2d-2$
hyperplanes to do phase retrieval in $\RR^d$.  We will show
that scalable frames $\{\phi_i\}_{i=1}^n$ do phase retrieval if and only if their induced hyperplanes $\{\phi_i^{\perp}\}_{i=1}^n$
do phase retrieval.  We then give examples to show this result
fails in general if the frame is not scalable. In particular, we give an example of a frame for $\RR^3$ which does phase
retrieval but its induced hyperplanes fail phase retrieval. Moreover, we show that such frames always exist in $\RR^d$ for any dimension $d$.
We also give an example of a family of hyperplanes in $\RR^3$
which do phase retrieval but their perp vectors fail phase
retrieval.  Finally, we give 6 hyperplanes in $\RR^4$ which
do phase retrieval.

\section{Preliminaries}

In this section we will give the background material needed for
the paper.  We start with the definition of a frame.
\begin{definition}\label{D:frame}
	A family of vectors $\Phi=\{\phi_i\}_{i=1}^n$ in $\RR^d$ is a {\bf frame} if there are constants $0<A\leq B<\infty$ so that for all $x\in \RR^d$
	\[A \|x\|^2 \leq \sum_{i=1}^n |\langle x, \phi_i\rangle|^2\leq B\norm{x}^2, \]
	where $A$ and $B$ are the {\bf lower and upper frame bounds} of the frame, respectively. The frame is called an {\bf A-tight frame} if $A=B$ and is a {\bf Parseval frame} if $A=B=1$.
\end{definition}
\begin{definition}\label{D:scalable_frame}
	A frame $\Phi=\{\phi_i\}_{i=1}^n$ in $\RR^d$ is called scalable if there exists scalars $\{s_i\}_{i=1}^n$ such that $\{s_i\phi_i\}_{i=1}^n$ is a tight frame for $\RR^d$.
\end{definition}
The main topics here are phase retrieval and norm retrieval in
$\RR^d$.

\begin{definition}\label{D:phase_ret&phaseless}
	Let $\Phi=\{\phi_i\}_{i=1}^n \subset \RR^d$ be such that for $x, y\in \RR^d$
	\[ |\langle x,\phi_i\rangle|=|\langle y,\phi_i\rangle|, \mbox{ for all }i=1,2,\ldots,n. \]
	$\Phi$ yields
	\begin{itemize}
		\item[(ii)]~ \textbf{phaseless reconstruction} if $x=\pm y$.
		\item[(iii)]~ \textbf{norm retrieval} if $\|x\|=\|y\|$.
	\end{itemize}
\end{definition}
\begin{remark}
	It is easy to see that $\{\phi_i\}_{i=1}^n$ does phase retrieval (norm retrieval) if and only if $\{c_i\phi_i\}_{i=1}^n$ does phase retrieval (norm retrieval), for any non-zero scalars $\{c_i\}_{i=1}^n\subset \RR^d$.
\end{remark}

The paper \cite{BCE} gives the minimal number of vectors needed
in $\RR^d$ to do phase retrieval.

\begin{theorem}[\cite{BCE}]
	In order for a frame $\{\phi_i\}_{i=1}^n$ in $\RR^d$ to do phase
	retrieval, it is necessary that $n\ge 2d-1$.
\end{theorem}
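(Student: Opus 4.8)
The plan is to reduce the statement to a spanning condition on subsets of the frame and then count. The starting observation is that $|\langle x,\phi_i\rangle| = |\langle y,\phi_i\rangle|$ is equivalent to $\langle x,\phi_i\rangle = \langle y,\phi_i\rangle$ or $\langle x,\phi_i\rangle = -\langle y,\phi_i\rangle$; writing $u = x-y$ and $v = x+y$, the first case reads $\langle u,\phi_i\rangle = 0$ and the second reads $\langle v,\phi_i\rangle = 0$. Thus a \emph{failure} of phase retrieval --- the existence of $x \neq \pm y$ with matching absolute measurements --- is exactly the existence of a partition $\{1,\dots,n\} = S \cup S^c$ together with nonzero vectors $u,v$ such that $u$ is orthogonal to $\phi_i$ for all $i \in S$ and $v$ is orthogonal to $\phi_i$ for all $i \in S^c$.

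First I would prove the contrapositive of the claim, namely that if $n \le 2d-2$ then phase retrieval fails. Given such a frame, I would partition the index set into two blocks $S$ and $S^c$ with $|S| \le d-1$ and $|S^c| \le d-1$, which is possible precisely because $n \le 2(d-1)$ (take $|S| = \lceil n/2 \rceil \le d-1$). Since fewer than $d$ vectors cannot span $\RR^d$, neither $\{\phi_i\}_{i\in S}$ nor $\{\phi_i\}_{i\in S^c}$ spans $\RR^d$, so each has a nontrivial orthogonal complement. I would then choose $0 \neq u \perp \spn\{\phi_i\}_{i\in S}$ and $0 \neq v \perp \spn\{\phi_i\}_{i\in S^c}$, and set
\[ x = \frac{u+v}{2}, \qquad y = \frac{v-u}{2}, \]
so that $x - y = u$ and $x + y = v$. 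For $i \in S$ this gives $\langle x,\phi_i\rangle = \langle y,\phi_i\rangle$, and for $i \in S^c$ it gives $\langle x,\phi_i\rangle = -\langle y,\phi_i\rangle$; in either case $|\langle x,\phi_i\rangle| = |\langle y,\phi_i\rangle|$ for all $i$. Because $u$ and $v$ are both nonzero, $x \neq y$ and $x \neq -y$, so $x \neq \pm y$ and phase retrieval fails.

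The conclusion is then immediate: any frame doing phase retrieval in $\RR^d$ must satisfy $n \ge 2d-1$. I expect the only delicate point to be the equivalence established in the first paragraph --- specifically the bookkeeping that extracts the partition $S, S^c$ from the sign pattern of the measurements and confirms that the condition $x \neq \pm y$ corresponds exactly to both $u$ and $v$ being nonzero. Once that is in place, the counting step is routine, and the value $2d-2$ is seen to be sharp: it is the largest $n$ for which a balanced two-block partition keeps both blocks strictly below the spanning threshold $d$, which is exactly what forces the bound $2d-1$.
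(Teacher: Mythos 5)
Your proof is correct, and it follows essentially the same route the paper takes: the paper cites the bound from [BCE] and notes that it follows from the complement property characterization (split $\{1,\dots,n\}$ with $n\le 2d-2$ into two blocks of size at most $d-1$, neither of which can span $\RR^d$), while your argument simply inlines the proof of the relevant direction of that characterization via the identity $|\langle x,\phi_i\rangle|=|\langle y,\phi_i\rangle| \iff \langle x-y,\phi_i\rangle\langle x+y,\phi_i\rangle=0$ and the construction $x=\frac{u+v}{2}$, $y=\frac{v-u}{2}$. The only direction of your opening ``equivalence'' that the theorem needs is the one you actually verify, and the check that $x\neq\pm y$ (from $u\neq 0$ and $v\neq 0$) is handled correctly, so there is no gap.
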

Also  \cite{BCE} presents  a fundamental classification of the
frames which do phase retrieval in $\RR^d$.  For this we need
a definition.

\begin{definition}[\cite{BCE}]\label{D:complement_prop}
	A frame $\Phi=\{\phi_i\}_{i=1}^n $in $\RR^d$ satisfies the {\bf complement property} if for all subsets ${I}\subset\{1, 2, \ldots, n\}$, either $\{\phi_i\}_{i\in I}$ or $\{\phi_i\}_{i\in I^c}$ spans $\RR^d$.
\end{definition}
A fundamental result from \cite{BCE} is:

\begin{theorem}[\cite{BCE}]
	A frame $\Phi$ does phaseless reconstruction in
	$\RR^d$ if and only if it has the complement property.
\end{theorem}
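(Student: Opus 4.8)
The plan is to prove both implications directly from the defining relation $|\langle x,\phi_i\rangle|=|\langle y,\phi_i\rangle|$, using the fact that over $\RR$ each such equality means $\langle x,\phi_i\rangle=\pm\langle y,\phi_i\rangle$.

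First I would show that the complement property implies phaseless reconstruction. Given $x,y$ with $|\langle x,\phi_i\rangle|=|\langle y,\phi_i\rangle|$ for every $i$, I would split the index set by sign: set $I=\{i:\langle x,\phi_i\rangle=\langle y,\phi_i\rangle\}$, so that $\langle x-y,\phi_i\rangle=0$ for $i\in I$, while for $i\in I^c$ the matching of intensities forces $\langle x,\phi_i\rangle=-\langle y,\phi_i\rangle$ and hence $\langle x+y,\phi_i\rangle=0$. The complement property forces one of $\{\phi_i\}_{i\in I}$, $\{\phi_i\}_{i\in I^c}$ to span $\RR^d$. If $\{\phi_i\}_{i\in I}$ spans, then $x-y$ is orthogonal to a spanning set and so $x=y$; if $\{\phi_i\}_{i\in I^c}$ spans, then $x+y=0$. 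Either way $x=\pm y$.

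For the converse I would argue contrapositively: if the complement property fails, I will produce $x,y$ with matching intensities but $x\neq\pm y$. Choose $I$ with neither $\{\phi_i\}_{i\in I}$ nor $\{\phi_i\}_{i\in I^c}$ spanning; then both of the corresponding orthogonal complements are nonzero, so I may pick $u\neq 0$ with $\langle u,\phi_i\rangle=0$ for all $i\in I$ and $v\neq 0$ with $\langle v,\phi_i\rangle=0$ for all $i\in I^c$. Taking $x=u+v$ and $y=u-v$, the $u$-term vanishes against $\phi_i$ for $i\in I$ and the $v$-term vanishes for $i\in I^c$, so one checks $\langle x,\phi_i\rangle=\pm\langle y,\phi_i\rangle$ on each block and the intensities agree. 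But $x=\pm y$ would force $v=0$ or $u=0$, contradicting the choice of $u,v$.

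I do not expect a genuine obstacle, since each direction reduces to linear algebra on the two blocks of a sign-partition. The only points needing care are the bookkeeping of which of $x\pm y$ vanishes on which index set, and the observation that over $\RR$ the phase ambiguity is precisely a global $\pm$ sign — this is exactly what makes the partition into $I$ and $I^c$ exhaustive, and it is the reason the same characterization is not expected to survive verbatim in the complex setting.
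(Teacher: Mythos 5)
Your argument is correct, and it is essentially the standard proof from the cited reference \cite{BCE}: the paper itself states this theorem without proof, so there is nothing internal to compare against. Both directions check out — the sign-partition $I=\{i:\langle x,\phi_i\rangle=\langle y,\phi_i\rangle\}$ correctly reduces the forward implication to the spanning dichotomy, and the construction $x=u+v$, $y=u-v$ for the contrapositive is exactly the classical counterexample (note that even if $u+v$ happened to vanish, $y=2u\neq 0$ still witnesses the failure, so no extra care is needed there).
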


It follows that if $\Phi=\{\phi_i\}_{i=1}^n$ does phase retrieval in $\RR^d$ then $n\ge 2d-1$.  Full spark is another important notion of vectors in frame theory. A formal definition is given below:
\begin{definition}\label{D:full_Spark}
	Given a family of vectors $\Phi=\{\phi_i\}_{i=1}^n$ in $\RR^d$, the {\bf spark} of $\Phi$ is defined as the cardinality of the smallest linearly dependent subset of $\Phi$. When spark$(\Phi) = d + 1$, every subset of size $d$ is linearly independent, and in that case, $\Phi$ is said to be {\bf full spark}.
\end{definition}

We note that from the definitions it follows that full spark frames with $n\ge 2d-1$ vectors have the complement property and hence do phaseless reconstruction.  Also, if $n=2d-1$ then the complement property clearly implies full spark.

We will need a generalization of phase retrieval to phase retrieval
by projections.
\begin{definition}
	A family of subspaces $\{W_i\}_{i=1}^n$ (or respectively, their
	induced projections $\{P_i\}_{i=1}^n$) do
	\begin{enumerate}
	\item {\bf phase retrieval} on
	$\RR^d$ if whenever $x,y \in \RR^d$ satisfy
	\[ \|P_ix\|=\|P_iy\|,\mbox{ for all }i=1,2,\ldots,n,\]
	then $x=\pm y$.
	
\item	It does {\bf norm retrieval} if $\|x\|=\|y\|$.
\end{enumerate}
\end{definition}

We will need a result from \cite{CCPW}.

\begin{proposition}\label{pp2}
Let projections $\{P_i\}_{i=1}^n$ do phase retrieval on $\RR^d$.
Then $\{(I-P_i)\}_{i=1}^n$ does phase retrieval if and only if
it does norm retrieval.
\end{proposition}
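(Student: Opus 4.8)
The plan is to dispose of one implication immediately and concentrate all the work on the other. If $\{I-P_i\}_{i=1}^n$ does phase retrieval, then it automatically does norm retrieval, since the defining conclusion $x=\pm y$ forces $\norm{x}=\norm{y}$; this needs no hypothesis on $\{P_i\}_{i=1}^n$ at all. So the real content is the converse, and that is where I would spend the argument.

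For the converse, suppose $\{I-P_i\}_{i=1}^n$ does norm retrieval, and take $x,y\in\RR^d$ satisfying $\norm{(I-P_i)x}=\norm{(I-P_i)y}$ for all $i$; the goal is to deduce $x=\pm y$. The first step is to feed this data straight into the norm retrieval hypothesis, which yields $\norm{x}=\norm{y}$. The key step is then to exploit that each $P_i$ is an orthogonal projection, so that $x=P_ix+(I-P_i)x$ is an orthogonal decomposition and hence $\norm{x}^2=\norm{P_ix}^2+\norm{(I-P_i)x}^2$, with the identical relation holding for $y$. Subtracting these two identities and invoking both $\norm{x}=\norm{y}$ and the hypothesis $\norm{(I-P_i)x}=\norm{(I-P_i)y}$, all the complementary terms cancel and I am left with $\norm{P_ix}=\norm{P_iy}$ for every $i$.

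The final step is to apply the standing assumption that $\{P_i\}_{i=1}^n$ does phase retrieval: the equalities $\norm{P_ix}=\norm{P_iy}$ for all $i$ force $x=\pm y$, which is precisely phase retrieval for $\{I-P_i\}_{i=1}^n$. I do not expect a genuine obstacle here, as the argument rests entirely on the Pythagorean identity for orthogonal projections. The conceptual point worth isolating is the role of the norm retrieval hypothesis: it supplies exactly the one scalar equation $\norm{x}=\norm{y}$ that is missing, and which is needed to transfer the norm information about the complementary projections $I-P_i$ back into norm information about the original projections $P_i$.
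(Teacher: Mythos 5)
Your proof is correct, and it is exactly the standard argument for this fact (the paper itself gives no proof, citing it from [CCPW], where the same Pythagorean-identity reasoning is used). The one implicit point worth stating explicitly is that each $P_i$ is an \emph{orthogonal} projection, so that $P_ix\perp(I-P_i)x$ and the identity $\norm{x}^2=\norm{P_ix}^2+\norm{(I-P_i)x}^2$ holds; with that noted, the argument is complete.
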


We note the following result from \cite{CCPW}:

\begin{theorem}[\cite{CCPW}]
	In $\RR^d$, for any integers $1\le k_i \le d-1$, there are subspaces
	$\{W_i\}_{i=1}^{2d-1}$ of $\RR^d$ with dim $W_i = k_i$ and
	$\{W_i\}_{i=1}^{2d-1}$ does phase retrieval.
\end{theorem}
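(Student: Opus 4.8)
The plan is to reduce the statement to a single clean spanning condition and then produce the subspaces as a generic point of a product of Grassmannians.

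First I would record a reformulation of phase retrieval by projections. For self-adjoint projections $P_i$ and $x,y\in\RR^d$ one has the polarization identity $\|P_ix\|^2-\|P_iy\|^2=\langle P_i(x+y),x-y\rangle$. Writing $u=x+y$ and $v=x-y$, the hypothesis $\|P_ix\|=\|P_iy\|$ for all $i$ becomes $\langle P_iu,v\rangle=0$ for all $i$, while the conclusion $x=\pm y$ becomes $u=0$ or $v=0$. Since $\langle P_iu,v\rangle=\langle u,P_iv\rangle$, the family $\{W_i\}_{i=1}^{2d-1}$ fails phase retrieval exactly when there exist nonzero $u,v$ with $u\perp P_iv$ for every $i$; and for a fixed $v$ such a $u$ exists iff $\{P_iv\}_{i=1}^{2d-1}$ does not span $\RR^d$. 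Hence $\{W_i\}_{i=1}^{2d-1}$ does phase retrieval if and only if for every nonzero $v\in\RR^d$ the vectors $\{P_iv\}_{i=1}^{2d-1}$ span $\RR^d$. This is the condition I would verify.

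Next I would realize the subspaces as a generic point of the product of Grassmannians $G=\prod_{i=1}^{2d-1}\mathrm{Gr}(k_i,d)$, whose points are exactly the tuples $(W_1,\dots,W_{2d-1})$ with $\dim W_i=k_i$, and $\dim G=\sum_i k_i(d-k_i)$. The goal is to show that the tuples failing the spanning condition form a proper subvariety of $G$, so that a good tuple exists. Fix nonzero $v$ and consider the incidence set of pairs $((W_i),[u])\in G\times\mathbb{RP}^{d-1}$ with $\langle P_iu,P_iv\rangle=0$ for all $i$ (using $\langle u,P_iv\rangle=\langle P_iu,P_iv\rangle$). The key point is that, for each fixed $u\neq0$ and each $i$, the equation $\langle P_iu,P_iv\rangle=0$ is a nontrivial (codimension $\ge 1$) condition on $W_i\in\mathrm{Gr}(k_i,d)$: it is not identically satisfied, as one sees by testing $W_i=\spn\{u\}$ when $\langle u,v\rangle\neq0$ and $W_i=\spn\{u+v\}$ when $u\perp v$, each of which yields a nonzero value. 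As the $2d-1$ conditions live on the independent factors $W_i$, they cut codimension at least $2d-1$ in $G$ for every fixed $u$.

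Then I would finish by dimension counting. For fixed $v$, fibering the incidence set over $[u]\in\mathbb{RP}^{d-1}$ bounds its dimension by $(\dim G-(2d-1))+(d-1)=\dim G-d$; its image $B_v\subseteq G$ (the bad tuples for this $v$) therefore satisfies $\dim B_v\le\dim G-d$. Finally, fibering $\bigcup_{v}B_v$ over $[v]\in\mathbb{RP}^{d-1}$ gives a set of dimension at most $(\dim G-d)+(d-1)=\dim G-1<\dim G$. Hence the bad tuples lie in a proper subvariety of $G$, so a generic tuple $(W_1,\dots,W_{2d-1})$ with $\dim W_i=k_i$ satisfies the spanning condition and does phase retrieval. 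I would remark that the same bookkeeping explains the count $2d-1$: with only $n$ subspaces the final bound reads $\dim G-n+2(d-1)$, which is $<\dim G$ precisely when $n\ge 2d-1$. The step I expect to be the main obstacle is making the two dimension counts rigorous rather than heuristic, in particular securing a fiber-dimension bound that is uniform over all $u$ and $v$, including the special pairs (such as $u\perp v$ or $u\parallel v$) where the projections degenerate. This is exactly why the verification above checks that $\langle P_iu,P_iv\rangle=0$ is never identically true on any factor $\mathrm{Gr}(k_i,d)$, for every pair $(u,v)$; with that uniform codimension-$1$ statement in hand, upper-semicontinuity of fiber dimension makes the count go through for arbitrary prescribed dimensions $k_i$, and the generic argument is cleaner than an ad hoc construction handling all mixed dimensions simultaneously.
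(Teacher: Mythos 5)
Your proposal is correct, but note at the outset that the paper itself offers no proof of this statement: it is quoted from \cite{CCPW} as a known result, so there is nothing in-paper to match against, and your argument is genuinely a different route from the one in the cited source. You first rederive, via polarization, the spanning criterion that the paper only introduces later as Edidin's theorem (Theorem \ref{edidin}): $\{W_i\}$ does phase retrieval iff $\{P_ix\}_{i}$ spans $\RR^d$ for every $x\neq 0$. You then show the tuples violating this form a semialgebraic subset of $\prod_i \mathrm{Gr}(k_i,d)$ of positive codimension; both fibration counts are sound in the semialgebraic category (fiber-dimension bounds plus non-increase of dimension under projection), and you correctly isolate the one uniform input needed, namely that $\langle P_iu,P_iv\rangle=0$ is never an identity on $\mathrm{Gr}(k_i,d)$ for nonzero $u,v$. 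The only slip is that your witnesses $\spn\{u\}$ and $\spn\{u+v\}$ are one-dimensional while $W_i$ must have dimension $k_i$: any $k_i$-dimensional $W_i\supseteq\spn\{u\}$ gives the value $\langle u,v\rangle\neq 0$, and when $u\perp v$ one takes $W_i=\spn\{u+v\}\oplus V$ with $V\perp\spn\{u,v\}$ and $\dim V=k_i-1\le d-2$, yielding the value $\|u\|^2\|v\|^2/\|u+v\|^2>0$; this padding is exactly where the hypothesis $k_i\le d-1$ enters. By contrast, the argument in \cite{CCPW} is constructive rather than generic, building the subspaces out of carefully chosen (full spark) spanning vectors and working through the complement property, since the spanning characterization you rely on was proved only later. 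What your approach buys is a single clean argument uniform over all prescribed dimensions $k_i$, together with a transparent explanation of the threshold $2d-1$; its cost is the appeal to real algebraic dimension theory (properness of zero sets of nonvanishing polynomials on the connected Grassmannian, and the fiber-dimension theorem for semialgebraic maps), which must be invoked with the care you anticipate.
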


The major open problem in the area of real phase retrieval is:
\begin{prob}
	What is the least number of subspaces needed to do phase retrieval
	on $\RR^d$?  What are the possible dimensions of these subspaces?
\end{prob}

For notation we will use:
\begin{notation}
	If $\Phi=\{\phi_i\}_{i=1}^n$ is a frame in $\RR^d$, we
	denote the induced hyperplanes as $\Phi^{\perp}=\{\phi_i^{\perp}
	\}_{i=1}^n$.
\end{notation}
\section{Phase Retrieval by Hyperplanes}

We will need a result of Edidin \cite{ED}, which is also generalized in \cite{WX}.

\begin{theorem}\label{edidin}
	Let $\{W_i\}_{i=1}^n$ be subspaces of $\RR^d$ with respective
	projections $\{P_i\}_{i=1}^n$.  The following are equivalent:
	\begin{enumerate}
		\item $\{W_i\}_{i=1}^n$ does phase retrieval.
		\item For every $0\not= x \in \RR^d$,  $\spn\{P_ix\}_{i=1}^n=\RR^d$.
	\end{enumerate}
\end{theorem}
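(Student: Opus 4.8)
The plan is to translate the phase-retrieval condition into a statement about orthogonality by means of a polarization identity, and then to recognize condition (2) as precisely the assertion that no nonzero vector can be orthogonal to every $P_i u$. First I would record the elementary identity available because each $P_i$ is an orthogonal projection (self-adjoint and idempotent), namely that for all $x,y\in\RR^d$,
\[
\norm{P_i x}^2-\norm{P_i y}^2=\langle P_i(x+y),\,x-y\rangle .
\]
This follows by setting $u=x+y$, $v=x-y$, so that $x=\tfrac12(u+v)$ and $y=\tfrac12(u-v)$, expanding $\langle P_i x,x\rangle-\langle P_i y,y\rangle$ using $\norm{P_i x}^2=\langle P_i x,x\rangle$, and observing that the diagonal terms cancel while the self-adjointness of $P_i$ collapses the cross terms to $\langle P_i u,v\rangle$.

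With this identity in hand, the hypothesis $\norm{P_i x}=\norm{P_i y}$ for all $i$ becomes equivalent to $\langle P_i u,v\rangle=0$ for all $i$, where $u=x+y$ and $v=x-y$; and $x=\pm y$ holds precisely when $u=0$ or $v=0$. Since $(x,y)\mapsto(u,v)$ is a bijection of $\RR^d\times\RR^d$, condition (1) is therefore equivalent to the reformulation: whenever $u,v\in\RR^d$ satisfy $\langle P_i u,v\rangle=0$ for all $i$, one has $u=0$ or $v=0$. This is the pivotal step, as it converts a statement about norms of projections into a clean orthogonality condition.

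Next I would prove the two implications directly from the reformulation. For (2)$\Rightarrow$(1), assume $\langle P_i u,v\rangle=0$ for all $i$ with $u\neq0$; then $\spn\{P_i u\}_{i=1}^n=\RR^d$ by (2), so $v$ is orthogonal to a spanning set and hence $v=0$. For (1)$\Rightarrow$(2) I argue by contraposition: if (2) fails, there is a nonzero $u$ with $\spn\{P_i u\}_{i=1}^n\neq\RR^d$, so its orthogonal complement contains a nonzero $v$, giving $\langle P_i u,v\rangle=0$ for all $i$ with both $u,v$ nonzero; through the reformulation this yields $x,y$ with $\norm{P_i x}=\norm{P_i y}$ for all $i$ yet $x\neq\pm y$, contradicting phase retrieval.

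The argument is essentially self-contained, so there is no single hard obstacle. The one place that requires care is the polarization identity together with the bookkeeping of the substitution $u=x+y$, $v=x-y$: in particular one must use $\langle P_i u,v\rangle=\langle P_i u,P_i v\rangle$ (valid since $P_i=P_i^*=P_i^2$) to see that orthogonality against $\{P_i u\}_{i=1}^n$ is the correct condition, and one must confirm the dictionary $x=\pm y\iff u=0\text{ or }v=0$. Once the reformulation is correctly set up, both implications are immediate consequences of the definitions of the span and its orthogonal complement.
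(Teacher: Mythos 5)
Your proof is correct and complete. Note that the paper itself does not prove Theorem \ref{edidin}: it is quoted as a known result of Edidin \cite{ED} (generalized in \cite{WX}), so there is no in-paper argument to compare against. Your polarization identity $\norm{P_ix}^2-\norm{P_iy}^2=\langle P_i(x+y),x-y\rangle$ is verified correctly (it uses exactly $\norm{P_ix}^2=\langle P_ix,x\rangle$ and self-adjointness to collapse the cross terms), the dictionary $x=\pm y\iff u=0$ or $v=0$ is right, and both implications then follow cleanly from the reformulation ``$\langle P_iu,v\rangle=0$ for all $i$ implies $u=0$ or $v=0$.'' This is essentially the standard argument from the cited source, so the proposal supplies a legitimate self-contained proof of a statement the paper leaves as a citation. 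One small remark: the observation $\langle P_iu,v\rangle=\langle P_iu,P_iv\rangle$ is true but not actually needed, since orthogonality of $v$ to the spanning set $\{P_iu\}_{i=1}^n$ is already the condition $\langle P_iu,v\rangle=0$ for all $i$.
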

We will show that for a scalable frame $\Phi$, both $\Phi$ and
$\Phi^{\perp}$ do norm retrieval.  For this we need a proposition.

		\begin{proposition}\label{pp1}
			Let $\{W_i\}_{i=1}^n$ be proper subspaces of $\RR^d$ with respective projections $\{P_i\}_{i=1}^n$. Then for any scalars $\{a_i\}_{i=1}^n \subset \RR$ and $0<A\in \RR$, the following are equivalent.
			\begin{enumerate}
				\item For every orthonormal basis $\{u_{i,j}\}_{j=1}^{n_i}$ of $W_i$, the set ${\{a_iu_{i,j}\}_{i=1}^n}_{j=1}^{n_i}$ is a A-tight frame.
				\item For some orthonormal basis $\{u_{i,j}\}_{j=1}^{n_i}$ of $W_i$, the set ${\{a_iu_{i,j}\}_{i=1}^n}_{j=1}^{n_i}$ is a A-tight frame.
				\item $\sum_{i=1}^{n}a_i^2P_i=A\cdot I$.
				\item $\sum_{i=1}^na_i^2(I-P_i)=\left (
				\sum_{i=1}^na_i^2 -A\right )\cdot I.$
				\item For every orthonormal basis $\{v_{i,j}\}_{j=1}^{d-n_i}$ of $W_i^\perp$, the set ${\{a_iv_{i,j}\}_{i=1}^n}_{j=1}^{d-n_i}$ is a $(\sum_{i=1}^na_i^2 -A)$-tight frame.
				\item For some orthonormal basis $\{v_{i,j}\}_{j=1}^{d-n_i}$ of $W_i^\perp$, the set ${\{a_iv_{i,j}\}_{i=1}^n}_{j=1}^{d-n_i}$ is a $(\sum_{i=1}^na_i^2 -A)$-tight frame.		
			\end{enumerate}
		\end{proposition}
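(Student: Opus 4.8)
The plan is to establish the chain of equivalences by proving a cycle together with the symmetric pairing of conditions. I would organize the argument around the central operator identity in (3), since the statements about orthonormal bases in (1)/(2) and (5)/(6) are really just reformulations of tightness, and tightness for a frame built from orthonormal bases of the subspaces translates directly into a statement about the frame operator.

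First I would record the key computation: if $\{u_{i,j}\}_{j=1}^{n_i}$ is any orthonormal basis of $W_i$, then $\sum_{j=1}^{n_i}\langle x,u_{i,j}\rangle u_{i,j}=P_ix$ for all $x$, because this sum is exactly the orthogonal projection onto $W_i$ expressed in an orthonormal basis. Consequently the frame operator of $\{a_iu_{i,j}\}$ is $\sum_{i=1}^n a_i^2 P_i$, independent of which orthonormal bases are chosen. This single observation immediately gives $(1)\Leftrightarrow(2)\Leftrightarrow(3)$: a collection is an $A$-tight frame precisely when its frame operator equals $A\cdot I$, and since the frame operator is $\sum_i a_i^2P_i$ regardless of the basis choice, the ``for every'' and ``for some'' versions coincide with the operator identity. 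By the identical argument applied to the orthogonal complements, using $\sum_j\langle x,v_{i,j}\rangle v_{i,j}=(I-P_i)x$, I get $(4)\Leftrightarrow(5)\Leftrightarrow(6)$, where the relevant tightness constant is whatever scalar multiple of $I$ appears in (4).

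The remaining link is $(3)\Leftrightarrow(4)$, which is a purely algebraic manipulation. Starting from $\sum_i a_i^2 P_i = A\cdot I$, I substitute $P_i = I-(I-P_i)$ to obtain $\sum_i a_i^2 I - \sum_i a_i^2(I-P_i)=A\cdot I$, and rearranging gives $\sum_i a_i^2(I-P_i)=\left(\sum_i a_i^2 - A\right)I$, which is exactly (4); the reverse direction reverses the same substitution. This also explains why the tightness constant for the complementary frame in (5)/(6) must be $\sum_i a_i^2 - A$.

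I do not anticipate a genuine obstacle here, since every step reduces to the standard fact that the frame operator of an orthonormal basis of a subspace is the projection onto that subspace. The only point requiring slight care is the independence of the frame operator from the choice of orthonormal basis, which is what makes the ``for every'' and ``for some'' statements equivalent rather than one strictly stronger than the other; I would make sure to state that computation cleanly at the outset so that it can be invoked on both the $W_i$ side and the $W_i^\perp$ side. A minor bookkeeping concern is ensuring the constant $\sum_i a_i^2 - A$ is genuinely positive so that condition (5)/(6) describes an honest tight frame, but this follows because the operator $\sum_i a_i^2(I-P_i)$ is positive semidefinite and equals that scalar times the identity.
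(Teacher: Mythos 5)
Your argument is correct and is essentially the paper's proof: both reduce (1), (2), (3) to the identity that the frame operator of $\{a_iu_{i,j}\}$ is $\sum_{i=1}^n a_i^2P_i$ regardless of the orthonormal bases chosen, obtain $(3)\Leftrightarrow(4)$ by the substitution $P_i=I-(I-P_i)$, and treat (4), (5), (6) symmetrically. The one imprecision is your justification that $\sum_{i=1}^n a_i^2-A>0$: positive semidefiniteness of $\sum_{i=1}^n a_i^2(I-P_i)$ only gives $\sum_{i=1}^n a_i^2-A\ge 0$, and to rule out equality you need (as the paper notes) that the $W_i$ are \emph{proper} subspaces together with $A>0$ (which forces some $a_i\neq 0$), since otherwise $\sum_{i=1}^n a_i^2(I-P_i)=0$ would force $P_i=I$ for every $i$ with $a_i\neq 0$.
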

		\begin{proof}
		$(1)\Rightarrow (2)$. Obvious.
		
			$(2) \Rightarrow (3)$. Let $\{u_{i,j}\}_{j=1}^{n_i}$ be the orthonormal basis of $W_i$ in (2). Then for any $x\in \RR^d $,
			$$a_i^2P_ix=\sum_{j=1}^{n_i}\langle x, a_iu_{i,j}\rangle a_iu_{i,j}.$$
			Hence $$\sum_{i=1}^na_i^2P_ix=\sum_{i=1}^{n}\sum_{j=1}^{n_i}\langle x, a_iu_{i,j}\rangle a_iu_{i,j}=Ax.$$
			Therefore, $\sum_{i=1}^{n}a_i^2P_i=A\cdot I$.
	
	$(3) \Rightarrow (1)$ Let $\{u_{i,j}\}_{j=1}^{n_i}$ be any orthonormal basis of $W_i$. Then we have
	$$a_i^2P_ix=\sum_{j=1}^{n_i}\langle x, a_iu_{i,j}\rangle a_iu_{i,j}.$$
	Hence $$Ax=\sum_{i=1}^{n}a_i^2P_ix=\sum_{i=1}^{n}\sum_{j=1}^{n_i}\langle x, a_iu_{i,j}\rangle a_iu_{i,j}.$$
	So, ${\{a_iu_{i,j}\}_{i=1}^n}_{j=1}^{n_i}$ is a A-tight frame.
	
	$(3) \Leftrightarrow (4)$. Obvious.
	
	Similarly, $(4), (5), (6)$ are equivalent, but we need to see
that $\sum_{i=1}^na_i^2 -A > 0$. This follows immediately from $  \sum_{i=1}^{n}a_i^2P_i=A\cdot I$ and $\{W_i\}_{i=1}^n$ are proper subspaces.
		\end{proof}

		\begin{proposition}\label{pp3}
			If $\{W_i\}_{i=1}^n$ satisfy one of the conditions in Proposition \ref{pp1}, then both $\{W_i\}_{i=1}^n$ and $\{W_i^\perp\}_{i=1}^n$ do norm retrieval.
		\end{proposition}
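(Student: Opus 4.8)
The plan is to exploit condition (3) of Proposition \ref{pp1}, namely the resolution of the identity $\sum_{i=1}^n a_i^2 P_i = A\cdot I$, together with the elementary fact that an orthogonal projection satisfies $\|P_i x\|^2 = \langle P_i x, x\rangle$ for every $x\in\RR^d$ (using $P_i = P_i^* = P_i^2$). The key point is that this converts the tight-frame relation into a formula expressing $\|x\|^2$ as a fixed nonnegative linear combination of the measurements $\|P_i x\|^2$, so that agreement of the measurements forces agreement of the norms.

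First I would establish norm retrieval for $\{W_i\}_{i=1}^n$. Suppose $x,y\in\RR^d$ satisfy $\|P_i x\| = \|P_i y\|$ for all $i$. Pairing condition (3) with $x$ gives
\[ A\|x\|^2 = \left\langle \sum_{i=1}^n a_i^2 P_i x,\, x\right\rangle = \sum_{i=1}^n a_i^2 \langle P_i x, x\rangle = \sum_{i=1}^n a_i^2 \|P_i x\|^2, \]
and the identical computation with $y$ yields $A\|y\|^2 = \sum_{i=1}^n a_i^2 \|P_i y\|^2$. Since $A>0$ and the right-hand sides agree term by term, we conclude $\|x\| = \|y\|$, so $\{W_i\}_{i=1}^n$ does norm retrieval.

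For the orthogonal complements I would run the same argument with condition (4) in place of (3). Writing $Q_i = I - P_i$ for the projection onto $W_i^\perp$, condition (4) reads $\sum_{i=1}^n a_i^2 Q_i = \left(\sum_{i=1}^n a_i^2 - A\right)\cdot I$, and Proposition \ref{pp1} records that the scalar $B := \sum_{i=1}^n a_i^2 - A$ is strictly positive. Given $\|Q_i x\| = \|Q_i y\|$ for all $i$, the same pairing produces $B\|x\|^2 = \sum_{i=1}^n a_i^2 \|Q_i x\|^2$ and $B\|y\|^2 = \sum_{i=1}^n a_i^2 \|Q_i y\|^2$, whence $\|x\| = \|y\|$.

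There is no serious obstacle; the argument is a direct consequence of the equivalent conditions already assembled in Proposition \ref{pp1}. The only subtlety worth flagging is the positivity of the constant $B$, which is exactly why the hypothesis that the $W_i$ be \emph{proper} subspaces must be carried along: without it the scalar dividing $\|x\|^2$ in the complementary case could vanish and the reconstruction of the norm would fail.
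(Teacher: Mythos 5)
Your proposal is correct and follows essentially the same route as the paper: both pair the resolution of the identity $\sum_i a_i^2 P_i = A\cdot I$ with $x$ to obtain $A\|x\|^2=\sum_i a_i^2\|P_ix\|^2$, and treat the complements symmetrically via condition (4). Your explicit remark on the positivity of $\sum_i a_i^2 - A$ is a welcome touch, though that point is already secured in Proposition \ref{pp1}.
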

		\begin{proof}
			The results follow from the fact that
			$$\sum_{i=1}^{n}a_i^2\| P_ix\|^2=\sum_{i=1}^{n}\langle a_iP_ix, a_iP_ix\rangle=\langle\sum_{i=1}^{n}a_i^2P_ix, x\rangle =A\Vert x\Vert^2.$$
The other case is similar.
		\end{proof}

		\begin{corollary}\label{co1}
				If $\{W_i\}_{i=1}^n$ satisfy one of the conditions in Proposition \ref{pp1}, then $\{W_i\}_{i=1}^n$ does phase retrieval if and only if $\{W_i^\perp\}_{i=1}^n$ does phase retrieval.
		\end{corollary}
		\begin{proof}
			This follows from Proposition \ref{pp2} and Proposition \ref{pp3}.
		\end{proof}
		
	\begin{corollary}\label{co2}
		If $\Phi=\{\phi_i\}_{i=1}^n$ is a scalable frame in $\RR^d$ then $\Phi$ does phase retrieval if and only if $\Phi^\perp$ does phase retrieval.
 	\end{corollary}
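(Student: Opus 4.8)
The plan is to reduce Corollary~\ref{co2} to Corollary~\ref{co1} by identifying each frame vector with the one-dimensional subspace it spans. For each $i$, set $W_i=\spn\{\phi_i\}$, assuming $d\ge 2$ so that each $W_i$ is a proper subspace (the case $d=1$ is trivial, since every hyperplane is $\{0\}$). The rank-one projection onto $W_i$ is $P_ix=\frac{\langle x,\phi_i\rangle}{\norm{\phi_i}^2}\phi_i$, so that $\norm{P_ix}=|\langle x,\phi_i\rangle|/\norm{\phi_i}$. Hence $\norm{P_ix}=\norm{P_iy}$ for all $i$ exactly when $|\langle x,\phi_i\rangle|=|\langle y,\phi_i\rangle|$ for all $i$; this shows that $\Phi$ does phase retrieval as a family of vectors if and only if $\{W_i\}_{i=1}^n$ does phase retrieval as a family of subspaces, and likewise that $\Phi^\perp=\{W_i^\perp\}_{i=1}^n$ does phase retrieval in the sense of projections.

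Next I would exploit scalability to verify that the subspaces $\{W_i\}_{i=1}^n$ satisfy condition (3) of Proposition~\ref{pp1}. By Definition~\ref{D:scalable_frame} there are scalars $\{s_i\}_{i=1}^n$ with $\{s_i\phi_i\}_{i=1}^n$ an $A$-tight frame for some $A>0$, that is, $\sum_{i=1}^n s_i^2\langle x,\phi_i\rangle\phi_i=Ax$ for all $x\in\RR^d$. Writing $\langle x,\phi_i\rangle\phi_i=\norm{\phi_i}^2P_ix$ and setting $a_i=s_i\norm{\phi_i}$, this becomes $\sum_{i=1}^n a_i^2P_i=A\cdot I$, which is precisely statement (3) of Proposition~\ref{pp1}. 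Thus one of the equivalent conditions of that proposition holds for the family $\{W_i\}_{i=1}^n$.

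Finally, Corollary~\ref{co1} applies directly to $\{W_i\}_{i=1}^n$ and yields that $\{W_i\}_{i=1}^n$ does phase retrieval if and only if $\{W_i^\perp\}_{i=1}^n$ does phase retrieval. Translating back through the identification of the first paragraph, $\Phi$ does phase retrieval if and only if $\Phi^\perp$ does, which is the desired statement. The only real work is the middle step — recognizing that scalability is exactly the resolution-of-identity condition (3) after absorbing the norms $\norm{\phi_i}$ into the coefficients — and I would expect this bookkeeping, together with checking that phase retrieval by the vectors coincides with phase retrieval by the spanned lines, to be the crux; everything else is an immediate appeal to results already established.
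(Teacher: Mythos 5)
Your proposal is correct and follows essentially the same route as the paper: identify $P_i$ with the rank-one projection onto $\spn\{\phi_i\}$, use scalability to obtain $\sum_{i=1}^n s_i^2\norm{\phi_i}^2P_i=A\cdot I$ (condition (3) of Proposition~\ref{pp1} with $a_i=s_i\norm{\phi_i}$), and invoke Corollary~\ref{co1}. Your added remarks — checking that phase retrieval by the vectors coincides with phase retrieval by the spanned lines, and disposing of $d=1$ — are details the paper leaves implicit, not a different argument.
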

 	\begin{proof}
If $P_i$ is the projection onto $\spn\{\phi_i\}$ then for any $x\in \RR^d$,
\[ \|\phi_i\|^2P_ix = \langle x, \phi_i\rangle \phi_i.\]
Since $\Phi$ is scalable then there exist scalars $\{s_i\}_{i=1}^n$ such that $\{s_i\phi_i\}_{i=1}^n$ is a $A$-tight frame.

Therefore, for any $x\in \RR^d$,
 \[Ax=\sum_{i=1}^{n}\langle x, s_i\phi_i\rangle s_i\phi_i =\sum_{i=1}^{n}s_i^2\|\phi_i\|^2P_ix.\]
The result follows by Corollary \ref{co1}.
 	\end{proof}

Now we will give examples to show that the Corollary \ref{co2}
does not hold in general without the assumption the frame
being scalable.  First, let us examine the obvious approach to see why it fails in general. It is known that if $\{\phi_i\}_{i=1}^n$
does phase retrieval and $T$ is an invertible operator then
$\{T\phi_i\}_{i=1}^n$ does phase retrieval.
If $\Phi = \{\phi_i\}_{i=1}^n$ is any frame with frame operator
$S$ which does phase retrieval, $S^{-1/2}\Phi=\{S^{-1/2}\phi_i\}_{i=1}^n$ is a Parseval frame and so
\[ \left \{ \left (S^{-1/2}\phi_i \right )^{\perp}\right \}_{i=1}^n
= \left \{  S^{1/2}\phi_i^{\perp} \right \}_{i=1}^n,\]
does phase retrieval.    So we would like
to apply the invertible operator $S^{-1/2}$ to our hyperplanes
to conclude that $\Phi^{\perp}$ does phase retrieval.  The problem
is that it is known \cite{CCPW} the invertible operators may not
take subspaces doing phase retrieval to subspaces doing phase
retrieval.
\begin{ex}
	There is a frame $\{\phi_i\}_{i=1}^5$ in $\RR^3$ which does phase
	retrieval but the hyperplanes
	$\{\phi_i^{\perp}\}_{i=1}^5$ fail phase retrieval.
\end{ex}
\begin{proof}		Let $\phi_1 = (0, 0, 1), \phi_2 = (1, 0, 1), \phi_3= (0, 1, 1), \phi_4=(1, 1-\sqrt{2}, 2), \phi_5= (1, 1, 1)$.
	
	Since $\{\phi_i\}_{i=1}^5$ is a full spark frame of $5$ vectors in $\RR^3$ then it does phase retrieval.
	
	We have,
	\begin{align*}
	W_1&=\{\phi_1^{\perp}\}=\{(x_1,x_2,x_3)\in \RR^3 : x_3=0\} \\
	W_2&=\{\phi_2^{\perp}\}=\{(x_1,x_2,x_3)\in \RR^3 : x_1+x_3=0\}\\
	W_3&=\{\phi_3^{\perp}\}=\{(x_1,x_2,x_3)\in \RR^3:x_2+x_3=0\}\\
	W_4&=\{\phi_4^{\perp}\}=\{(x_1,x_2,x_3)\in \RR^3:x_1+(1-\sqrt{2})x_2+2x_3=0\}\\
	W_5&=\{\phi_5^{\perp}\}=\{(x_1,x_2,x_3)\in \RR^3:x_1+x_2+x_3=0\}.\\
	\end{align*}
	Let $P_i$ be the orthogonal projection onto $W_i$, then
	\begin{align*}
	P_1(\phi_5)&=(1, 1, 0)\\
	P_2(\phi_5)&=(0, 1, 0)\\
	P_3(\phi_5)&=(1, 0, 0)\\
	P_4(\phi_5)&=(1/2, (1+\sqrt{2})/2, 0)\\
	P_5(\phi_5)&=(0, 0, 0).\\
	\end{align*}
	Thus, $\spn\{P_i(\phi_5)\}_{i=1}^5=W_1\not=\RR^3$. By Theorem \ref{edidin}, $\{W_i\}_{i=1}^5$ cannot do phase retrieval.
\end{proof}

\begin{corollary}
	There exists $\{\phi_i\}_{i=1}^5$  in $\RR^3$ which does phase retrieval but $\{\phi_i^{\perp}\}_{i=1}^5$ cannot do norm retrieval.
\end{corollary}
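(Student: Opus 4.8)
The plan is to reuse the frame $\{\phi_i\}_{i=1}^5$ from the preceding Example and then invoke Proposition \ref{pp2}, so that norm retrieval and phase retrieval become interchangeable for the induced hyperplanes. First I would record that doing phase retrieval by the vectors $\{\phi_i\}_{i=1}^5$ is the same as doing phase retrieval by the rank-one projections $Q_i$ onto $\spn\{\phi_i\}$, since $\|Q_i x\| = |\langle x,\phi_i\rangle|/\|\phi_i\|$ for each $i$ and $x\in\RR^3$; thus the family $\{Q_i\}_{i=1}^5$ does phase retrieval on $\RR^3$.

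Next I would identify the hyperplane projections with the complementary projections $I-Q_i$, which are exactly the orthogonal projections onto $W_i=\phi_i^{\perp}$. Proposition \ref{pp2}, applied with the phase-retrieving family $\{Q_i\}_{i=1}^5$ in the role of the $P_i$, then yields the equivalence that $\{I-Q_i\}_{i=1}^5$ does phase retrieval if and only if it does norm retrieval.

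The final step is to feed in the conclusion of the Example: there we saw that $\spn\{P_i\phi_5\}_{i=1}^5\neq\RR^3$, so by Theorem \ref{edidin} the hyperplanes $\{W_i\}_{i=1}^5$ fail phase retrieval. Combining this with the equivalence extracted from Proposition \ref{pp2} forces $\{W_i\}_{i=1}^5=\{\phi_i^{\perp}\}_{i=1}^5$ to fail norm retrieval as well, which is precisely the asserted statement.

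I do not anticipate a genuine obstacle here, since the result is essentially a corollary of the Example together with Proposition \ref{pp2}. The only point requiring care is the bookkeeping that the vectors $\phi_i$ and the rank-one projections $Q_i$ do phase retrieval simultaneously, so that Proposition \ref{pp2} may legitimately be applied with $P_i=Q_i$ and with $I-Q_i$ playing the role of the hyperplane projections; once that identification is fixed the conclusion is immediate.
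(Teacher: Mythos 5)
Your proposal is correct and is precisely the argument the paper intends: the corollary is stated without proof immediately after the Example, and the implied reasoning is exactly your combination of that Example (the hyperplanes $\{\phi_i^{\perp}\}_{i=1}^5$ fail phase retrieval) with Proposition \ref{pp2} applied to the rank-one projections $Q_i$ onto $\spn\{\phi_i\}$. Your extra care in checking that the vectors and their rank-one projections do phase retrieval simultaneously is exactly the right bookkeeping, and the conclusion follows as you describe.
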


Now we will generalize this example to all of $\RR^d$.  This
	example looks like it came from nowhere, so we first explain
why this is logical by {\it reverse engineering} the above example
in $\RR^d$.
We need a full spark set of unit vectors $\{\phi_i\}_{i=1}^{2d-1}$
(which therefore do phase retrieval on $\RR^d$) with projections
$P_i$ onto $\spn\{\phi_i\}$, and a vector $x$ so that
$\{(I-P_i)x\}_{i=1}^{2d-1}$ is contained in a hyperplane.  So we
decide in advance that the vector $x$ will be $x=(1,1,\ldots,1)$
and the hyperplane will be
\[ H=\{(c_1,c_2,\ldots,c_{d-1},0):c_i \in \RR\}.\]
Given a $\phi = (a_1,a_2,\ldots,a_d)$ of this type, we have:
\begin{enumerate}
\item We have:
\[ \|\phi\|^2 = \sum_{i=1}^d a_i^2=1.\]
\item We have:
\begin{eqnarray*}
(I-P_i)x &=& (1,1,\ldots,1)- \langle x,\phi\rangle \phi\\
&=& (1,1,\ldots,1)-\left(\sum_{i=1}^da_i\right) (a_1,a_2,\ldots,a_d).
\end{eqnarray*}
Since this vector is to be in the hyperplane $H$, we have:
\[ 1= a_d \sum_{i=1}^da_i .\]
\end{enumerate}
Combining this with (1) implies:
\[ a_d = \frac{\sum_{i=1}^{d-1}a_i^2}{\sum_{i=1}^{d-1}a_i}.\]
Now we can present the example:

\begin{example}
There are vectors $\{\phi_i\}_{i=1}^{2d-1}$ in $\RR^d$ which do
phase retrieval but $\{\phi_i^{\perp}\}_{i=1}^{2d-1}$ does not
do phase retrieval.
\end{example}

\begin{proof}
Consider the set
		 	$$A:=\left\{\left(a_1, a_2, \ldots, a_{d-1}, \dfrac{\sum_{i=1}^{d-1}a_i^2}{\sum_{i=1}^{d-1}a_i}\right) : a_i\in \mathbb{R}, \sum_{i=1}^{d-1}a_i\not=0 \right\}.$$
		 	Let $x=(1, 1, \ldots, 1)\in \RR^d$. Let any $\phi\in A$ and denote $P_{\phi}$ the orthogonal projection onto $\spn\{\phi\}$. Then we have
		 	$$(I-P_{\phi})(x)=x-\langle x,\dfrac{\phi}{\Vert \phi\Vert}\rangle \dfrac{\phi}{\Vert \phi\Vert}.$$
		 	Denote $b_d$ the $d-$ coordinate of $(I-P_{\phi})(x)$, then
		 	\begin{align*}
		 	b_d&=1-\dfrac{1}{\Vert \phi\Vert^2}\left(\sum_{i=1}^{d-1}a_i+\dfrac{\sum_{i=1}^{d-1}a_i^2}{\sum_{i=1}^{d-1}a_i}\right)\dfrac{\sum_{i=1}^{d-1}a_i^2}{\sum_{i=1}^{d-1}a_i}\\
		 	&=1-\dfrac{1}{\Vert \phi\Vert^2}\left(\sum_{i=1}^{d-1}a_i^2+\left(\dfrac{\sum_{i=1}^{d-1}a_i^2}{\sum_{i=1}^{d-1}a_i}\right)^2\right)=0.
		 	\end{align*}
		 	Let
		 	\begin{align*}
		 	\phi_1&=(1, 0, \ldots, 0, 1)\\
		 	\phi_2&=(0, 1, \ldots, 0, 1)\\
		 	&\cdots\\
		 	\phi_{d-1}&=(0, 0, \ldots, 1, 1)\\
		 	\phi_d=x&=(1, 1, \ldots, 1, 1).\\
		 		\end{align*}
		 		Then $\{\phi_i\}_{i=1}^d$ is a linearly independent set in $\RR^d$ and $\{\phi_i\}_{i=1}^d\subset A$.
		 		
		 		Now we will show that for any finite hyperplanes $\{W_i\}_{i=1}^k$ in $\RR^d$, there exists a vector $\phi\in A$ such that $\phi\notin \cup_{i=1}^kW_i$.
		 		Suppose by a contradiction that $A\subset \cup_{i=1}^kW_i$.
		 		
		 		Consider the set
		 		$$B:=\left\{\left(x, x^2, \ldots, x^{d-2}, 1-\sum_{i=1}^{d-2}x^i, \sum_{i=1}^{d-2}x^{2i}+\left(1-\sum_{i=1}^{d-2}x^i\right)^2\right) : x\in \RR\right\},$$ then $B\subset A$.
		 		
		 		Hence $B\subset \cup_{i=1}^kW_i.$
		 		Therefore, there exists $j\in\{1, \ldots, k\}$ such that $W_j$ contains infinitely many vectors in $B$.
		 		
		 		Let $u=(u_1, u_2, \ldots, u_d)\in W_j^{\perp}, u\not=0$. Then we have
		 		
		 		$$\langle u, \phi_x\rangle =0$$
		 		for infinitely many $\phi_x\in B$.
		 		
		 		Thus,
		 		 $$\sum_{i=1}^{d-2}u_ix^i+u_{d-1}\left(1-\sum_{i=1}^{d-2}x^i\right)+u_d\left(\sum_{i=1}^{d-2}x^{2i}+\left(1-\sum_{i=1}^{d-2}x^i\right)^2\right)=0,$$ for infinitely many $x$.
		 		
		 		This implies $u_1=u_2=\cdots=u_d=0$, which is a contradiction.
		 		
		 		From above, we can pick $d-1$ vectors $\{\phi_i\}_{i={d+1}}^{2d-1}$ in $B$ such that $\{\phi_i\}_{i=1}^{2d-1}$ is a full spark of vectors in $\RR^d$. Thus, $\{\phi_i\}_{i=1}^{2d-1}$ does phase retrieval in $\RR^d$.
		 		
		 		Moveover, since $\spn\{(I-P_i)(x)\}_{i=1}^{2d-1}\not=\RR^d$ then $\{\phi_i^{\perp}\}_{i=1}^{2d-1}$ cannot do phase retrieval by Theorem \ref{edidin}.
\end{proof}

In general, if hyperplanes $\{W_i\}_{i=1}^n$ do phase retrieval in $\RR^d$, it does not ensure that the complement vectors do phase retrieval. The following is an example.

\begin{ex}
	There are 5 vectors $\{\phi_i\}_{i=1}^5$
	in $\RR^3$ which fail phase retrieval but their induced
	hyperplanes $\{\phi_i^{\perp}\}_{i=1}^5$ do phase retrieval.
\end{ex}
\begin{proof}
	
	In $\RR^3$, let
	\begin{align*}
	W_1&=\spn\{e_2, e_3\}\\
	W_2&=\spn\{e_1, e_3\}\\
	W_3&=\spn\{e_1+e_2, e_3\}\\
	W_4&=\spn\{e_1, e_2+e_3\}\\
	W_5&=\spn\{e_2, e_1+e_3\}.
	\end{align*}
	Let $P_i$ be the projection onto $W_i$. Then for any $x=(x_1, x_2, x_3)$, we have
	\begin{align*}
	P_1x&=(0, x_2, x_3)\\
	P_2x&=(x_1, 0, x_3)\\
	P_3x&=\left(\dfrac{x_1+x_2}{2},\dfrac{x_1+x_2}{2}, x_3\right)\\
	P_4x&=\left(x_1, \dfrac{x_2+x_3}{2}, \dfrac{x_2+x_3}{2}\right)\\
	P_5x&=\left(\dfrac{x_1+x_3}{2}, x_2, \dfrac{x_1+x_3}{2}\right).
	\end{align*}
	For any $x\not=0$, the rank of the matrix whose the rows are $P_ix$ equals $3$. Therefore, $\{P_ix\}_{i=1}^5$ spans $\RR^3$. By Theorem \ref{edidin}, $\{W_i\}_{i=1}^5$ does phase retrieval in $\RR^3$.
	
	We also have $$W_1^\perp=\spn\{e_1\}, \quad W_2^\perp=\spn\{e_2\}, \quad W_3^\perp=\spn\{u_3\},$$
	$$ W_4^\perp=\spn\{u_4\}, \quad W_5^\perp=\spn\{u_5\},$$ for some $u_3, u_4, u_5\in \RR^3$.
	
	Since $e_1, e_2, u_3 \perp e_3$ then $\spn\{e_1, e_2, u_3\}\not=\RR^3$. Thus, $\{e_1, e_2, u_3, u_4, u_5\}$ fails the complement property. Therefore, it cannot do phase retrieval.
\end{proof}

\section{An Example in $\RR^4$}

In this section we will give an example of 6 hyperplanes in
$\RR^4$ which do phase retrieval.  First, we will show that
this is the minimal number of hyperplanes which can do
phase retrieval.

\begin{theorem}
	If hyperplanes $\{W_i\}_{i=1}^n$ do phase retrieval in $\RR^d$
	then $n\ge 2d-2$.  Moreover, if $n=2d-2$ then the vectors
	$\{W_i^{\perp}\}_{i=1}^{2d-2}$ are full spark.
\end{theorem}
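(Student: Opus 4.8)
The plan is to run everything through Edidin's criterion (Theorem~\ref{edidin}): the hyperplanes $\{W_i\}_{i=1}^n$ do phase retrieval if and only if $\spn\{P_ix\}_{i=1}^n=\RR^d$ for every nonzero $x$. Write $W_i^\perp=\spn\{\phi_i\}$ with $\|\phi_i\|=1$, so that $P_ix=x-\langle x,\phi_i\rangle\phi_i$ and hence, for any $w\in\RR^d$,
\[ \langle w,P_ix\rangle=\langle w,x\rangle-\langle x,\phi_i\rangle\langle w,\phi_i\rangle. \]
To exhibit a failure of phase retrieval it therefore suffices to produce nonzero vectors $x,w$ with $\langle w,P_ix\rangle=0$ for all $i$: then every $P_ix$ lies in the hyperplane $w^\perp$, so $\{P_ix\}_{i=1}^n$ cannot span $\RR^d$. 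In each case I will arrange $\langle w,x\rangle=0$ and split the index set into two groups, one on which $x\perp\phi_i$ and one on which $w\perp\phi_i$, so that each summand in the displayed identity vanishes.

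For the lower bound, suppose toward a contradiction that $n\le 2d-3$. Then one can choose an integer $k$ with $n-d+2\le k\le d-1$, the interval being nonempty precisely because $n\le 2d-3$. Set $U=(\spn\{\phi_1,\dots,\phi_k\})^\perp$ and $V=(\spn\{\phi_{k+1},\dots,\phi_n\})^\perp$; the choice of $k$ forces $\dim U\ge d-k\ge 1$ and $\dim V\ge d-(n-k)\ge 2$. Pick any nonzero $x\in U$; since $\dim(V\cap x^\perp)\ge\dim V-1\ge 1$, pick a nonzero $w\in V$ with $w\perp x$. Then $\langle w,x\rangle=0$, while $\langle x,\phi_i\rangle=0$ for $i\le k$ and $\langle w,\phi_i\rangle=0$ for $i>k$, so $\langle w,P_ix\rangle=0$ for every $i$, contradicting Theorem~\ref{edidin}. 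Hence $n\ge 2d-2$.

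For the full-spark claim, take $n=2d-2$, assume $\{W_i\}$ does phase retrieval, and suppose for contradiction that $\{\phi_i\}$ is not full spark, so some index set $J$ with $|J|=d$ has $\{\phi_i\}_{i\in J}$ linearly dependent. Then $\spn\{\phi_i\}_{i\in J}$ has dimension at most $d-1$, so there is a nonzero $w$ with $w\perp\phi_i$ for all $i\in J$. The complement satisfies $|J^c|=d-2$, so $\{\phi_i\}_{i\in J^c}\cup\{w\}$ consists of at most $d-1$ vectors and its orthogonal complement is nonzero; pick a nonzero $x$ in it. Exactly as before $\langle w,x\rangle=0$ and each $\langle w,P_ix\rangle=0$ (using $w\perp\phi_i$ on $J$ and $x\perp\phi_i$ on $J^c$), so $\{P_ix\}$ fails to span, contradicting Theorem~\ref{edidin}. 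Therefore $\{W_i^\perp\}_{i=1}^{2d-2}$ is full spark.

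The bulk of the work is routine dimension bookkeeping; the one point demanding care is the inequality governing the range of $k$, namely that the group receiving the extra constraint $w\perp x$ must have orthogonal complement of dimension at least $2$, which is exactly what pins the threshold at $2d-2$ rather than $2d-3$. The ``moreover'' part is the same mechanism in disguise: when $n=2d-2$ the generic split has no valid $k$, but a $d$-element linear dependence behaves like a set of only $d-1$ independent constraints and thereby supplies the single extra dimension of freedom needed to rerun the construction.
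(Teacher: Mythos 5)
Your argument is correct and is essentially the paper's: both proofs run through Edidin's criterion (Theorem~\ref{edidin}), split the index set into a group of size at most $d-1$ and a complementary group of size at most $d-2$, and choose a nonzero $x$ orthogonal to the $\phi_i$ in the larger group so that $\{P_ix\}_{i=1}^n$ cannot span $\RR^d$. The only difference is how non-spanning is certified --- the paper counts that at most $d-1$ distinct vectors remain, while you exhibit an explicit nonzero witness $w$ orthogonal to every $P_ix$ --- and the underlying dimension bookkeeping is the same.
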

\begin{proof}
	Assume, by way of contradiction, that $n \le 2d-3$.  Choose a
	vector
	\[ 0\not= x \in \cap_{i=1}^{d-1}W_i.\]
	So $P_ix=x$ for all $i=1,2,\ldots,d-1$.  It follows that the
	set $\{P_ix\}_{i=1}^n$ has at most $d-1$ non-zero vectors and
	hence cannot span $\RR^d$, contradicting Theorem \ref{edidin}.
	
\vskip12pt
For the {\it moreover} part, we proceed by way of contradiction.
Let $W_i^{\perp}=\spn\{\phi_i\}$ for $i=1,2,\ldots,2d-2$ and assume there
exists $I\subset [2d-2]$ with $|I|=d$ and $\{\phi_i\}_{i\in I}$
does not span the whole space.  Choose $0\not= x\perp \phi_i$
for all $i\in I$. It follows that $x\in W_i$ for all $i\in I$ and
so $P_ix=x$ for all $i\in I$.  But, $|I^c|=d-2$ and so $\{P_ix\}_{i=1}^{2d-2}$ contains at most $d-1$ distinct elements and so
cannot span, contradicting Theorem \ref{edidin}.
\end{proof}

Now we are ready for the main result of this section. In \cite{XU}
it was shown that there are six 2-dimensional subspaces of
$\RR^4$ which do phase retrieval.  We will now extend this result
to hyperplanes in $\RR^4$.

\begin{theorem}
	Suppose that $d=4$. There exist $6$ hyperplanes $W_1,\ldots,W_6\subset \RR^4$ which do phase retrieval on $\RR^4$.
\end{theorem}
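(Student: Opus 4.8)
The plan is to exhibit six explicit hyperplanes in $\RR^4$ and verify phase retrieval using the Edidin criterion (Theorem \ref{edidin}): it suffices to produce six unit vectors $\{\phi_i\}_{i=1}^6$ spanning the perp lines $W_i^{\perp}$ and show that for every $0 \neq x \in \RR^4$, the vectors $\{(I-P_i)x\}_{i=1}^6$ span $\RR^4$, where $P_i$ projects onto $\spn\{\phi_i\}$. Since we proved that $n=2d-2=6$ forces the perp vectors to be full spark, a natural starting point is to take the six $\phi_i$ to be a full-spark configuration in $\RR^4$ — for instance columns of a suitably chosen matrix, or the known six-vector configuration from \cite{XU} adapted so that the associated $2$-dimensional subspaces there become the relevant data here.

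The key computational step is to understand the span condition directly. Writing $P_i x = \langle x, \phi_i\rangle \phi_i$ (with $\|\phi_i\|=1$), the vectors $(I-P_i)x = x - \langle x,\phi_i\rangle \phi_i$ lie in $\RR^4$, and I would test when they fail to span. The span of $\{(I-P_i)x\}_{i=1}^6$ is deficient precisely when there is a common normal $u \neq 0$, i.e.
\begin{equation*}
\langle u, x\rangle = \langle x,\phi_i\rangle \langle u, \phi_i\rangle \quad \text{for all } i=1,\ldots,6.
\end{equation*}
So the strategy is: assume for contradiction that for some $x \neq 0$ there exists such a $u \neq 0$, and derive that $x$ must vanish (or that $u$ must vanish). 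This turns the geometric condition into six bilinear equations in the coordinates of $x$ and $u$; the goal is to show the only solutions have $x=0$ or $u=0$. Choosing the $\phi_i$ to be full spark guarantees that no small subset of these equations degenerates, which is what drives the contradiction.

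The main obstacle is the verification itself: the condition must hold for \emph{every} nonzero $x$, so this is not a single linear-algebra check but a family of rank conditions parametrized by $x$ (equivalently, showing a certain determinantal variety is empty away from $x=0$). I expect the cleanest route is to fix the explicit $\phi_i$ so the six equations above, viewed as linear constraints on $u$ for generic $x$, form a $6\times 4$ system of rank $4$, and then handle the finitely many special directions of $x$ (where some $\langle x,\phi_i\rangle = 0$) separately, using full spark to rule each out. The delicate part is selecting the numerical entries of the $\phi_i$ so that all these cases close simultaneously; I would lean on the $\RR^4$ construction in \cite{XU} to supply a configuration already verified to satisfy the spanning condition, then translate its phase-retrieval property for the $2$-planes into the statement for the hyperplanes $W_i = \phi_i^{\perp}$.
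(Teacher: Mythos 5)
There is a genuine gap: you set up the right criterion but never actually verify it, and the two shortcuts you propose for closing the verification both fail. First, full spark of the perp vectors $\{\phi_i\}$ is necessary (as shown for $n=2d-2$) but nowhere near sufficient: the paper's own example of $2d-1$ full spark vectors in $\RR^d$ whose induced hyperplanes fail phase retrieval shows exactly that a full spark configuration can have a bad direction $x$ for which $\{(I-P_i)x\}$ fails to span. So "choosing the $\phi_i$ to be full spark" does not "drive the contradiction." Second, the configuration in \cite{XU} consists of six $2$-dimensional subspaces of $\RR^4$; hyperplanes in $\RR^4$ are $3$-dimensional, and there is no way to "translate" phase retrieval by $2$-planes into phase retrieval by hyperplanes --- the central message of this paper is precisely that phase retrieval does not pass between a family of subspaces and their orthogonal complements in general. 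A smaller but real error: the special directions where some $\langle x,\phi_i\rangle=0$ form hyperplanes of directions, not finitely many, so they cannot be "ruled out one by one using full spark."

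The condition you write, that no $u\neq 0$ satisfies $\langle u,x\rangle=\langle x,\phi_i\rangle\langle u,\phi_i\rangle$ for all $i$ unless $x=0$, is equivalent to requiring that the only symmetric matrix $Q$ of rank at most $2$ with $\mathrm{Tr}\bigl((I-\phi_i\phi_i^T)Q\bigr)=0$ for all $i$ is $Q=0$; this is exactly the determinantal-variety formulation the paper uses. But establishing emptiness of that variety for an explicit choice of six vectors is the entire content of the proof, and it is done in the paper by a Gr\"obner basis computation and elimination down to a degree-$10$ homogeneous binary form, followed by a Sturm-sequence check that it has no nontrivial real roots (following Vinzant). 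Your proposal stops exactly where this work begins: no explicit vectors are produced, and no mechanism is given for certifying that the bilinear system has only trivial real solutions.
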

\begin{proof}
	Set
	\[
	W_j:=\phi_j^\perp\subset \RR^4, \quad j=1,\ldots,6,
	\]
	where
	\begin{eqnarray*}
		\phi_1&=&(2,-1,2,2)/\sqrt{13},\,\,\phi_2=(2,5,4,1)/\sqrt{46},\,\, \phi_3=(0,4,-1,-1)/\sqrt{18},\\
		\phi_4&=&(5,4,-2,-4)/\sqrt{61},\,\, \phi_5=(4,1,5,3)/\sqrt{51}, \,\, \phi_6=(3,-4,-4,-3)/\sqrt{50}.
	\end{eqnarray*}
	
Note that $\|P_jx\|^2={\rm Tr}(A_j X)$ where $X=xx^T$, $A_j=u_ju_j^T+v_jv_j^T+\omega_j\omega_j^T$
and $u_j,v_j,\omega_j\in \RR^4$ is an  orthonormal  basis of $W_j$. Then $W_1,\ldots,W_6$ do phase retrieval if and only if
\[
{\mathcal Z}:=\{Q\in \RR^4: Q=Q^T, {\rm rank}(Q)\leq 2, Tr(A_jQ)=0, j=1,\ldots,6\}
\]
only contains zero matrix.
We write $Q$ in the form of
\[
Q= \begin{pmatrix}
x_{11} & x_{12} & x_{13}& x_{14} \\
x_{12} & x_{22} & x_{23} & x_{24}\\
x_{13} & x_{23} & x_{33} & x_{34} \\
x_{14} & x_{24} & x_{34} & x_{44} \\
\end{pmatrix},
\]
where $x_{jk}, 1\leq j\leq k\leq 4,$ are $10$  variables. The ${\rm rank}(Q)\leq 2$ if and only if
$m_{j,k}:=m_{j,k}(x_{11},x_{12},\ldots,x_{44})=0$ where $m_{j,k}$  denotes the determinant of the sub-matrix  formed by deleting the $j$th row and $k$th column from the matrix $Q$. Noting that $A_j=I-\phi_j\phi_j^T$, we have
\[
\ell_j:=\ell_j(x_{11},\ldots,x_{44}):=Tr(A_jQ)=Tr(Q)-\phi_j^TQ\phi_j.
\]
The ${\mathcal Z}$ only contain zero matrix if and only if  the homogeneous  polynomial system
\begin{equation}\label{eq:deng}
\ell_1=\cdots=\ell_6=m_{1,1}=\cdots=m_{4,4}=0
\end{equation}
has no  non-trivial (i.e., non-zero) real solutions.

We next verify the polynomial system (\ref{eq:deng}) only has real zero solution following the ideas of  Vinzant \cite[Theorem 1]{small}.
Using the computer algebra software {\tt Maple}, we compute   a Gr\"{o}bner basis of the ideal
\[
\left<\ell_1,\ldots,\ell_6,m_{1,1},\ldots,m_{4,4}\right>
\]
 and elimination (see \cite{Sturm}).
 The result is a polynomial $f_0\in \Q[x_{34},x_{44}]$, which is a homogeneous polynomial of degree $10$:
	
	 \begin{eqnarray*}
	 	f_0&=& 615836814694440125755941750205355957259806055430532973956877900 x_{4,4}^{10}\\
	& &-884972594452387958848562473144241797030697764519228205098183524x_{4, 4}^9x_{3, 4}\\
	 	& &+37549510562762689603032479610577980614684970115180508761212602923x_{4, 4}^8x_{3, 4}^2\\
	 	& &-261784289245252068342511157673868998003077035922935758454568869970x_{4, 4}^7x_{3, 4}^3\\
	 	& &+1318646361014374203805595493716801537462083922918839965435901151518x_{4, 4}^6x_{3, 4}^4\\
	 	& &+2323672503729013471271218611541822606087314313103855222266887257194x_{4, 4}^5x_{3, 4}^5\\
	 	& &+841099655929202539990506870648349938942927420225588274968467286492x_{4, 4}^4x_{3, 4}^6\\
	 	& &+2453118466138743624272476494499733256382267234695398509857315458204x_{4, 4}^3x_{3, 4}^7\\
	 	& &+2686702635361560203562012680667911834582476444588124478311966009776x_{4, 4}^2x_{3, 4}^8\\
	 	& &+59872475066978406270800582425071592403273130463063552339405262912x_{4, 4}x_{3, 4}^9\\
	 	& &+950484050032900617743793729374383632917614227356173754905368787200x_{3, 4}^{10}.
	 \end{eqnarray*}
We can verify that the univariate polynomial $f_0(1,x_{4,4})$ has no real zero roots  using Sturm sequence, and hence
\[
\{(x_{3,4},x_{4,4})\in \RR^2 : f_0(x_{3,4},x_{4,4})=0\}=\{(0,0)\},
\]
which implies that  if $(x_{1,1},x_{1,2},\ldots,x_{3,4},x_{4,4})$
is a real solution of (\ref{eq:deng})  then $x_{3,4}=x_{4,4}=0$.
 By computing  a Gr\"{o}bner basis of the ideal, we obtain that
\[
1\in \left<x_{3,4},x_{4,4}, x_{j,k}-1,\ell_1,\ldots,\ell_6,m_{1,1},\ldots,m_{4,4}\right>, \quad 1\leq j\leq k\leq 4
\]
which means that
(\ref{eq:deng}) does not have nonzero real root with $x_{3,4}=x_{4,4}=0$.
The maple code for these computation is posted at http://lsec.cc.ac.cn/$\sim$xuzq/phase.htm.
 Combining results above, we obtain that
(\ref{eq:deng}) has no  non-trivial  real solutions.

\end{proof}

\begin{corollary}
There are six hyperplanes $\{W_i\}_{i=1}^6$ doing phase retrieval on $\RR^4$ but
$\{W_i^{\perp}\}_{i=1}^6$ does not do phase retrieval.
\end{corollary}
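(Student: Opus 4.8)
The plan is to read both halves of the statement off results that are already in hand. The theorem immediately preceding furnishes six explicit hyperplanes $W_1,\ldots,W_6\subset\RR^4$ that do phase retrieval on $\RR^4$, so the first assertion is settled; the only remaining task is to verify that the perp spaces $\{W_i^{\perp}\}_{i=1}^6$ fail phase retrieval.

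First I would observe that each $W_i^{\perp}=\spn\{\phi_i\}$ is one-dimensional, so that $\{W_i^{\perp}\}_{i=1}^6$ does phase retrieval precisely when the six vectors $\{\phi_i\}_{i=1}^6$ do phase retrieval as a frame in $\RR^4$. This collapses the problem to the question of whether six vectors can do phase retrieval in a four-dimensional space.

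Next I would invoke the cardinality obstruction recorded in the preliminaries: a frame doing phase retrieval in $\RR^d$ must contain at least $2d-1$ vectors. For $d=4$ this requires at least $7$ vectors, and here there are only $6$, so $\{\phi_i\}_{i=1}^6$ cannot do phase retrieval. The version I would actually write out phrases this through the complement property: partitioning $\{1,\ldots,6\}$ into two blocks of size $3$, neither $\{\phi_i\}_{i\in I}$ nor $\{\phi_i\}_{i\in I^c}$ can span the $4$-dimensional space, so the complement property fails and phase retrieval is impossible by the Balan--Casazza--Edidin characterization.

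There is essentially no obstacle within the corollary itself --- the whole force of the statement rests in the preceding theorem, whose proof hinges on the Gr\"obner-basis elimination and the Sturm-sequence verification. Once that computational fact is granted, the corollary is an immediate counting argument, and its point is precisely to exhibit a concrete family of hyperplanes and their orthogonal complements displaying opposite phase-retrieval behavior, complementing the earlier example in $\RR^3$.
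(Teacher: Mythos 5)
Your proposal is correct and matches the intended argument: the paper states this corollary without proof, as it follows immediately from the preceding theorem together with the fact that the perps are six one-dimensional subspaces, and six vectors cannot do phase retrieval in $\RR^4$ since $2d-1=7$ are required (equivalently, any $3{+}3$ split violates the complement property). Nothing is missing.
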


\bibliographystyle{amsplain}

\end{document}